\newtheorem{theorem}{Theorem}[section]
\newtheorem{definition}[theorem]{Definition}
\newtheorem{lemma}[theorem]{Lemma}
\newtheorem{conjecture}[theorem]{Conjecture}
\newtheorem{corollary}[theorem]{Corollary}
\title{Standard Examples as Subposets of Posets}
\author{Csaba Bir\'o}
\address{Department of Mathematics, University of Louisville, Louisville KY
40292, USA} \email{csaba.biro@louisville.edu}
\author{Peter Hamburger}
\address{Department of Mathematics, Western Kentucky
University, Bowling Green, KY 42101, USA}
\email{peter.hamburger@wku.edu}
\author{Attila P\'or}
\address{Department of Mathematics, Western Kentucky
University, Bowling Green, KY 42101, USA} \email{attila.por@wku.edu}
\begin{document}

\begin{abstract}
We prove that a poset with no induced subposet $S_k$ (for fixed
$k\geq 3$) must have dimension that is sublinear in terms of the
number of elements.
\end{abstract}

\maketitle

\section{Introduction}

Dushnik and Miller (1941) first studied order dimension,~
\cite{Dus-Mil-41}. Let $P$ be a poset. A set of its linear extensions
$\{L_1,\ldots,L_d\}$ forms a \emph{realizer}, if $L_1\cap\cdots\cap
L_d=P$. The minimum cardinality of a realizer is called the
\emph{dimension} of the poset $P$. This concept is also sometimes
called the \emph{order dimension} or the \emph{Dushnik-Miller
dimension of the partial order}.

The \emph{standard example} $S_n$ on $2n$ elements is the poset
formed by considering the $1$-element subsets, and the
$(n-1)$-element subsets of a set of $n$ elements, ordered by
inclusion. It is known that the dimension of $S_n$ is half of the
number of elements of $S_n$, that is, $\dim(S_n)=n$. It is also
known that there are posets of arbitrarily large dimension without
an $S_3$ subposet. For further study of the dimension of posets we
refer the readers to  the monograph \cite{Tro-CAPOS}, and to the
survey article \cite{Gra-Gro-Lov-HOC-Tro}.

We use the notation $|P|$ for the number of elements (of the ground
set) of the poset $P$, and we use the standard notation $[n]$ for
the set $\{1,2,\ldots,n\}$. The \emph{upset} (sometimes called
\emph{ideal}) of the element $x$ is the set $U(x)=\{y\in P: y>x\}$.
Similarly, the \emph{downset} of the element $x$ is $D(x)=\{y\in P:
y<x\}$. The symbol $x\|y$ is used  to denote that $x$ is
incomparable with $y$ in the poset.

Let $P$ and $Q$ be two posets. $Q$ is a \emph{subposet} of $P$, if
there is an induced copy of $Q$ in $P$. More precisely, $Q$ is a
subposet of $P$ if there is a bijection $f$ from a subset of the
ground set of $P$ to the ground set of $Q$ with the property that
$f(x)<f(y)$ if and only if $x<y$. If $P$ does not contain $Q$ as a
subposet, $P$ is called \emph{$Q$-free}. Note that some authors use
the term ``subposet'' in the ``non-induced'' sense. We conform to
the more standard usage, and we say ``an extension of $Q$ is a
subposet of $P$'' if we ever need the non-induced meaning.

Hiraguchi \cite{Hir-55} proved that for any poset with $|P|\geq 4$,
$\dim(P)\leq |P|/2$. Bogart and Trotter \cite{Bog-Tro-73} showed
that if $|P|\geq 8$, then the only extremal examples of Hiraguchi's
theorem are the standard examples $S_n$ with $\dim(S_n)=|S_n|/2$. A
natural question arises: if the dimension is just a bit less than
the
maximum, can we expect largely the same structure as in the extremal case?

\medskip

Bir\'o, F\"uredi, and Jahanbekam \cite{Bir-Fur-Jah-13} conjectured
the following.
\begin{conjecture}\label{conj:stability}
For every $t<1$, but sufficiently close to $1$ there is a $c>0$ and
$N$ positive integer, so that if $|P|\geq 2n\geq N$, and
$\dim(P)\geq tn$, then $P$ contains $S_{\lfloor cn\rfloor}$.
\end{conjecture}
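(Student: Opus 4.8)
The plan is to establish the statement in its principal regime $|P|=2n$, where $\dim(P)\ge tn$ asserts that the dimension is within a factor $t$ of Hiraguchi's bound $|P|/2$; thus the conjecture is a \emph{stability} form of the Bogart--Trotter theorem \cite{Bog-Tro-73}, and the goal is to upgrade their exact structural conclusion ($P=S_n$ when $t=1$) to a robust one ($P\supseteq S_{\lfloor cn\rfloor}$ when $t$ is close to $1$). I would work throughout with critical pairs: a realizer is the same as a family of linear extensions reversing every critical pair, and $S_m$ is precisely a set of $m$ incomparable critical pairs $\{a_i,b_i\}$ that pairwise form alternating $2$-cycles ($a_i<b_j$ and $a_j<b_i$ for $i\ne j$) together with $a_i\|a_j$ and $b_i\|b_j$. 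So the target is a clean family of $\lfloor cn\rfloor$ such ``rungs''.

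The engine is a robust, stability version of the peeling behind Hiraguchi's bound. Recall that $\dim(P)\le|P|/2$ holds because for some pair $\{x,y\}$ one has $\dim(P-\{x,y\})\ge\dim(P)-1$; iterating removes $2n$ points in $n$ steps and loses at most $n$ units of dimension. First I would peel $P$ down to a bounded core, recording at each step whether the removal is \emph{tight} (drops the dimension by exactly $1$) or \emph{slack} (drops it by $0$). Since $\dim(P)\ge tn$ falls to $O(1)$ over at most $n$ steps, at least $tn-O(1)$ steps are tight and at most $(1-t)n+O(1)$ are slack. Next I would invoke a quantitative refinement of Bogart--Trotter's equality analysis to arrange each tight removal to delete an incomparable critical pair $\{a_i,b_i\}$; because incomparability is inherited by induced subposets, each recorded rung is genuinely incomparable in $P$. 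This yields a set $R$ of at least $(t-o(1))n$ rungs of $P$.

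It remains to extract from $R$ a linear-size subfamily whose rungs pairwise realize the exact $S_m$ pattern. Here I would convert near-extremality into a bound on \emph{bad} rungs, those whose endpoints violate a standard-example relation with a positive fraction of the other rungs. The aim is to show that at most $O((1-t)n)$ rungs are bad, each such deviation being chargeable to a slack step or to a certifiable unit of lost dimension, of which the budget $(1-t)n+o(n)$ permits only that many. Discarding the bad rungs leaves at least $(t-O(1-t))n$ rungs that pairwise exhibit every required relation, i.e.\ an induced copy of $S_{\lfloor cn\rfloor}$; tracking constants gives $c=c(t)>0$ with $c\to1$ as $t\to1$.

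The main obstacle is exactly this bad-rung bound: a genuinely quantitative stability strengthening of Bogart--Trotter asserting that each structural deviation of the peeled rungs from an exact standard example forces a measurable, non-reusable deficit in the dimension. Establishing that these deficits are additive enough that $\Theta(n)$ deviations cost $\Theta(n)$ units, rather than overlapping so as to be absorbed within the slack budget, is where the difficulty of the conjecture concentrates; by contrast, ruling out deviations one at a time only reproduces the fixed-size $S_k$ that the sublinear bound of the present paper already yields, and it is the passage to linearly many simultaneous constraints that I expect to demand genuinely new ideas.
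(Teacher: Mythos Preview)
The statement you are attempting is Conjecture~\ref{conj:stability}, and the paper contains \emph{no proof of it}: it is introduced as a conjecture, and the paper explicitly calls it ``still open'' just before the final corollary. What the paper actually establishes is the much weaker Corollary that for every \emph{fixed} $k\ge 3$ and every $t<1$, sufficiently large $P$ with $\dim(P)\ge tn$ contains $S_k$; this follows from $D(n,k)=o(n)$, proved via Ramsey theory on UB-colorings of $k$-subsets of the minimal side of a Kimble split. There is therefore nothing in the paper to compare your argument to.

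As for your proposal itself, you correctly identify it as incomplete. The peeling framework and the tight/slack bookkeeping are fine and do give $\ge tn-O(1)$ tight steps, but the two substantive claims are not established: (i) that each tight Hiraguchi removal can be taken to be an incomparable critical pair forming a ``rung'' (Bogart--Trotter characterizes the \emph{global} extremal posets, not the local structure of a single dimension-dropping pair inside an arbitrary $P$), and (ii) the ``bad-rung bound'' asserting that deviations from the $S_m$ pattern among the recorded rungs can be charged additively to the $(1-t)n$ slack budget. You yourself flag (ii) as the crux and say it would ``demand genuinely new ideas''; that is exactly the gap, and it is why the conjecture remains open. In particular, nothing in the paper's Ramsey/UB-coloring machinery helps here, since that method inherently produces subsets of size $r(n)=o(n)$ and cannot yield a linear-size $S_{\lfloor cn\rfloor}$.
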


Conjecture~\ref{conj:stability} was mentioned at the ``Problems in
Combinatorics and Posets'' workshop \cite{workshop} in 2012 and it
is listed as Problem \#1 of the workshop. Even the existence
of an $S_3$ was unclear.

\medskip

In this paper we show that a poset with no induced subposet $S_k$
(for fixed $k\geq 3$) must have dimension that is sublinear in terms
of the number of elements.

\medskip

A few more definitions are needed. The \emph{height} of a poset is
the size of a maximum chain. Height two posets are called bipartite.
A \emph{bipartition} of a bipartite poset $P$ is a partition $(A,B)$
of its ground set together with a fixed linear order on the parts,
such that $x<y$ in $P$ implies $x\in A$ and $y\in B$. Note that the
bipartition includes a linear ordering on both $A$ and $B$,
respectively.  They are not denoted to avoid clutter, but they are
important in the discussion.

A \emph{critical pair} $(x,y)$ is an ordered pair of elements of $P$
with the properties $x\|y$, $D(x)\subseteq D(y)$, and $U(y)\subseteq
U(x)$. A linear extension $L$ \emph{reverses} the critical pair
$(x,y)$, if $y<x$ in $L$. A set of linear extensions $\mathcal{L}$
\emph{reverses} $(x,y)$ if there is a linear extension in
$\mathcal{L}$ that reverses $(x,y)$. It is known that a set of
linear extensions is a realizer if and only if it reverses every
critical pair of $P$. This makes the investigation simpler in
bipartite posets, where all incomparable minimum--maximum pairs are
critical, and they are the only (interesting) critical pairs; the
remaining critical pairs have a very special structure and can be
usually handled in a simple way.

Let $P$ be a poset. A poset $Q$ is the \emph{dual} of $P$, if $Q$ is
defined on the same ground set, but every order is reversed, that
is, $x<y$ in $P$ iff $x>y$ in $Q$. Obviously, $\dim(P)=\dim(Q)$.

\section{Preliminaries}

\begin{definition}
Let $k\geq 3$ integer. Let ${\mathcal{F}}_k$ denote the set of
finite $S_k$-free posets.
\begin{align*}
D(n,k)&=\max\{\dim (P): P\in\mathcal{F}_k,|P|=n\}\\
\Delta(n,k)&=\max\{\dim (P): P\in\mathcal{F}_k,\text{ $P$ is
bipartite},|P|=n\}
\end{align*}
\end{definition}

The main goal of the paper is to prove the following theorem.
\begin{theorem}\label{thm:main}
\[
D(n,k)=o(n).
\]
\end{theorem}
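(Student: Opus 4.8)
The plan is to pass first to bipartite posets and then to a purely graph‑theoretic statement about bipartite graphs.

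\textbf{Step 1 (reduction to the bipartite case).} For a poset $P$ let $\mathrm{Split}(P)$ be the bipartite poset on two disjoint copies $P',P''$ of the ground set with $x'<y''$ iff $x\le y$ in $P$; it is standard that $\dim(P)\le\dim(\mathrm{Split}(P))$ (see \cite{Tro-CAPOS}), and $|\mathrm{Split}(P)|=2|P|$. I first check that $\mathrm{Split}(P)$ is $S_k$‑free whenever $P$ is. Suppose $x_1',\dots,x_m''$ and $y_1'',\dots,y_m''$ induce $S_m$ in $\mathrm{Split}(P)$, i.e. $x_i\le y_j$ in $P$ exactly when $i\ne j$; in particular $x_i\not\le y_i$, so $x_i\ne y_i$. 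If $x_i<x_j$ for $i\ne j$ then $x_i<x_j\le y_i$, a contradiction, so $x_1,\dots,x_m$ is an antichain, and symmetrically so is $y_1,\dots,y_m$; for $m\ge3$ one also cannot have $x_i=y_j$ (pick $\ell\ne i,j$: $x_\ell\le y_j=x_i\le y_\ell$ gives $x_\ell\le y_\ell$), so the $2m$ elements are distinct. Finally $y_i\not\le x_i$ (else $x_\ell\le y_i\le x_i$ for $\ell\ne i$, contradicting the antichain), so $x_i\|y_i$, while $x_i<y_j$ for $i\ne j$; thus $P$ contains $S_m$. Hence $D(n,k)\le\Delta(2n,k)$, and it suffices to prove $\Delta(n,k)=o(n)$ for every fixed $k\ge3$.

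\textbf{Step 2 (translation to bipartite graphs).} Let $P$ be bipartite and $S_k$‑free with parts $A,B$; after deleting the (at most constantly dimension‑relevant) elements comparable to everything on the other side, let $G=G(P)$ be the bipartite graph on $A\cup B$ with $ab\in E(G)$ iff $a\|b$. The nontrivial critical pairs of $P$ are exactly the edges of $G$, and $P$ contains an induced $S_k$ iff $G$ contains an induced $kK_2$ (an induced perfect matching on $2k$ vertices). Call $F\subseteq E(G)$ a \emph{chain subgraph} if $(A\cup B,F)$ has no induced $2K_2$ — equivalently the sets $N_F(a)$, $a\in A$, are nested — and let $\operatorname{ccn}(G)$ be the least number of chain subgraphs covering $E(G)$. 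If $F_1,\dots,F_t$ cover $E(G)$, then for each $i$ the critical pairs lying in $F_i$ contain no alternating cycle: if $(a_\ell,b_\ell)$, $\ell\in\mathbf{Z}/m$, is such a cycle with $a_\ell<b_{\ell+1}$ in $P$, choosing $a_{\ell_0}$ with $N_{F_i}(a_{\ell_0})$ minimal gives $b_{\ell_0}\in N_{F_i}(a_{\ell_0})\subseteq N_{F_i}(a_{\ell_0-1})$, i.e. $a_{\ell_0-1}\|b_{\ell_0}$, contradicting $a_{\ell_0-1}<b_{\ell_0}$. Hence each $F_i$ is reversed by a single linear extension of $P$, and together with two further linear extensions (reversing all within‑part critical pairs via a linear order on $A$, on $B$, and their reverses) these realize $P$. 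So $\dim(P)\le\operatorname{ccn}(G)+2$, and the theorem reduces to the \textbf{Core Lemma:} a bipartite graph on $n$ vertices with no induced $kK_2$ has $\operatorname{ccn}=o(n)$.

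\textbf{Step 3 (plan for the Core Lemma).} Two estimates are available. On the one hand $\operatorname{ccn}(G)\ge\chi(H)$, where $H$ is the conflict graph on $E(G)$ in which two edges are adjacent iff they span an induced $2K_2$; no induced $kK_2$ means $\omega(H)<k$, so $\alpha(H)\gtrsim|E(G)|^{1/(k-1)}$ by Ramsey and $\chi(H)=O\big(|E(G)|^{1-1/(k-1)}\big)$, which for $G$ \emph{sparse} (say $|E(G)|=o(n^{(k-1)/(k-2)})$) is already $o(n)$; turning a colouring of $H$ into a chain‑subgraph cover costs a further factor, handled because each colour class, split into matchings, induces a bounded‑induced‑matching‑number subgraph coverable by $O_k(1)$ chain subgraphs. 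On the other hand $\operatorname{ccn}(G)$ is at most the width of the neighbourhood‑containment preorder on $A$ (the union of the stars of a nested family is a chain subgraph). The plan for the dense case is a partition/density‑increment argument: partition $A$ and $B$ into intervals with respect to the given linear orders; blocks that are nearly complete, nearly empty, or already chain graphs contribute $O(1)$ chain subgraphs each; the forbidden $kK_2$ forces the neighbourhoods to be ``banded'', so the genuinely dense blocks can be absorbed into few chain subgraphs; and a bounded number of exceptional blocks is small enough to recurse on.

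\textbf{Main obstacle.} The crux is exactly the dense case: when $|E(G)|$ is (super)linear the $\chi(H)$ bound is worthless — for $k\ge4$ it even exceeds $n$ — so one must genuinely exploit the structure imposed by ``no induced $kK_2$'' on dense bipartite graphs, rather than any sparsity, to cover their edges by sublinearly many chain subgraphs. This is the only point where largeness of $n$ is essential — $\operatorname{ccn}$ of a fixed‑size $kK_2$‑free bipartite graph is unbounded — and it is the source of the rather weak sublinear rate in the theorem.
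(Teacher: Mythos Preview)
Your Step~1 is correct and matches the paper's reduction via the Kimble split (Lemma~\ref{lemma:splitnoSk} and Theorem~\ref{thm:splitdim}). Your Step~2 is also correct: the reformulation $\dim(P)\le\operatorname{ccn}(G)+2$ is standard, and ``$P$ contains an induced $S_k$'' is indeed equivalent to ``$G$ contains an induced $kK_2$''. So your Core Lemma is \emph{exactly} the statement $\Delta(n,k)=o(n)$, restated in graph language. In other words, after two pages of (valid) translation you have arrived back at the bipartite case of the theorem, which is its entire content; nothing has yet been proved.

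Your Step~3 does not prove the Core Lemma. In the sparse regime you correctly note $\operatorname{ccn}(G)\ge\chi(H)$ and $\chi(H)=O(|E(G)|^{1-1/(k-1)})$, but the inequality goes the wrong way: an upper bound on $\chi(H)$ says nothing about $\operatorname{ccn}(G)$. Your proposed fix (``turn a colouring of $H$ into a chain cover'') does not work as stated: a colour class of $H$ is a set of edges no two of which form an induced $2K_2$ \emph{in $G$}, but it may well contain an induced $2K_2$ \emph{in itself}, and ``splitting into matchings'' only makes this worse, since any matching of size $\ge 2$ is already an induced $2K_2$ in itself. More importantly, the dense case --- which you yourself flag as the main obstacle --- is left as a plan with no argument; the suggested density increment is not carried out, and there is no mechanism offered by which ``no induced $kK_2$'' forces the banded structure you need.

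For comparison, the paper's proof of the bipartite case is completely different and avoids chain covers altogether. It works directly with linear extensions. The $S_k$-free hypothesis is encoded as a $k$-colouring of the $k$-subsets of $A$ (the \emph{UB-colouring}: colour $S$ by the index of an element of $S$ that has no mate in $B$). Hypergraph Ramsey then yields a monochromatic $Q\subseteq A$ of any prescribed size $q\le r(|A|)$. The key step (Lemma~\ref{lemma:linext}, proved via a short probabilistic argument, Lemma~\ref{lemma:prob}) is that all critical pairs $(a,b)$ with $a\in Q$ can be reversed using only $O(k2^k\log q)$ linear extensions; hence $\dim(P)\le\dim(P-Q)+c\log q$. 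Iterating this removal with $q$ chosen so that $(c\log q)/q<\epsilon$ gives $\Delta(n,k)\le\epsilon n+O_\epsilon(1)$. This is exactly the ``iterated removal'' paradigm that your density-increment sketch was reaching for, but with an explicit, short implementation; the trade-off is that the resulting bound is governed by the inverse hypergraph Ramsey function $r(n)$ and is therefore very weak quantitatively.
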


The proof is in Section~\ref{sect:kimble}.\\

Let $\mathcal{H}$ be a hypergraph. A \emph{coloring} of
$\mathcal{H}$ is an assignment of positive integers (colors) to its
vertices (with no restrictions). A $\ell$-coloring is a coloring
with $\ell$ colors. A subset $H$ of $V(\mathcal{H})$ is
\emph{monochromatic}, if every edge $E\subseteq H$ receives the same
color in the coloring.

$K_n^k$ is used to denote the complete $k$-uniform hypergraph on $n$
vertices. We need the following version of Ramsey's Theorem.
\begin{theorem}\label{thm:ramsey}\cite{Wes-ITGT}
For all $k,q,\ell$ positive integers there exists an $N$ such that if $n\geq
N$, then every $\ell$-coloring of $K_n^k$ contains a monochromatic
set of size $q$.
\end{theorem}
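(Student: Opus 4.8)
The plan is to induct on the uniformity $k$, following the classical Erd\H{o}s--Rado argument. Write $R(k,q,\ell)$ for the least admissible $N$, and read a coloring as an assignment of one of $\ell$ colors to each edge of $K_n^k$, so that a set $H$ is monochromatic when every edge $E\subseteq H$ gets the same color. For the base case $k=1$ an edge is a single vertex, so a monochromatic set of size $q$ is just $q$ vertices of one color; taking $N=\ell(q-1)+1$ and applying the pigeonhole principle settles it.

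For the inductive step, suppose the theorem holds for $(k-1)$-uniform hypergraphs and set $R=R(k-1,q,\ell)$. Given an $\ell$-coloring $c$ of the edges of $K_n^k$, I would greedily build a sequence of distinct vertices $v_1,\ldots,v_t$ together with a nested family $[n]=S_0\supseteq S_1\supseteq\cdots\supseteq S_t$, maintaining $v_{i+1}\in S_i$ and the \emph{homogenization property}: for every $(k-1)$-subset $T$ of $\{v_1,\ldots,v_i\}$ the color $c(T\cup\{w\})$ is the same for all $w\in S_i$. To pass from $S_i$ to $S_{i+1}$, pick any $v_{i+1}\in S_i$ and classify each $w\in S_i\setminus\{v_{i+1}\}$ by its \emph{color profile}, the tuple of colors $c(T\cup\{w\})$ as $T$ ranges over the $(k-1)$-subsets of $\{v_1,\ldots,v_{i+1}\}$; since there are at most $\ell^{\binom{i+1}{k-1}}$ profiles, pigeonhole isolates a large sub-class with constant profile, which becomes $S_{i+1}$.

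The payoff is that every vertex chosen after step $i$ lies in $S_i$, so the color of a $k$-edge $\{v_{i_1},\ldots,v_{i_k}\}$ (indices increasing) depends only on its first $k-1$ vertices. I can therefore define a reduced $\ell$-coloring $\hat c$ of the $(k-1)$-subsets of $\{v_1,\ldots,v_t\}$ by $\hat c(\{v_{i_1},\ldots,v_{i_{k-1}}\})=c(\{v_{i_1},\ldots,v_{i_{k-1}},v_m\})$ for any $m>i_{k-1}$; this is well defined precisely by the homogenization property recorded at the step where $v_{i_{k-1}}$ was chosen. If $t\geq R$, the induction hypothesis yields a $\hat c$-monochromatic set $W$ of size $q$, and unwinding the definition of $\hat c$ shows that every $k$-subset of $W$ carries the same original color, so $W$ is monochromatic for $c$ and the step is complete.

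The main obstacle is purely quantitative: I must guarantee the sequence reaches length $t\geq R$ before the candidate set runs out. Since passing from $S_i$ to $S_{i+1}$ costs at most a factor of $\ell^{\binom{i+1}{k-1}}$ (plus the removed vertex), it suffices to take $N$ larger than $\prod_{i=1}^{R}\ell^{\binom{i}{k-1}}$ and to check that $|S_i|\geq 1$ persists up to step $t$. This is the only delicate bookkeeping, and it is routine once the homogenization step is in place; the resulting threshold $N$ inflates at each level of the induction on $k$, giving the familiar tower-type growth but posing no conceptual difficulty.
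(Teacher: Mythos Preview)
The paper does not prove this theorem at all; it is simply quoted as a classical result with a citation to West's textbook. Your outline is the standard Erd\H{o}s--Rado induction on the uniformity and is correct. One tiny wrinkle worth tightening: as written, $\hat c$ is only well defined on $(k-1)$-subsets of $\{v_1,\ldots,v_{t-1}\}$, since a $(k-1)$-subset containing $v_t$ has no later vertex to append; so you should aim for $t\geq R+1$ rather than $t\geq R$. This is absorbed by the quantitative slack you already allow, so it changes nothing substantive.
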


The (hypergraph) Ramsey number $R(k,q,\ell)$ is the least $N$ in
Theorem~\ref{thm:ramsey}. For a fixed $k$, the function $R(k,q,k)$
is a function of one variable; the following function may be
regarded as its inverse.
\begin{definition}\label{crom}
Let $k\geq 3$ fixed. Let $r(n)$ denote the minimum size of a largest
monochromatic subset in a $k$-coloring of $K_n^k$.
\end{definition}
We use a simple corollary of Theorem~\ref{thm:ramsey}, that is, we
use that $r(n)\to\infty$ as $n\to\infty$.

Throughout this paper a positive integer $k\geq 3$ is fixed. The
purpose of this is that most of the time we assume that our posets
are $S_k$-free.

Let $P$ be an $S_k$-free bipartite poset with bipartition $(A,B)$.
We fix an ordering of the vertices of $A$. Let
$S=\{a_1,\ldots,a_k\}\subseteq A$, and assume that the indexing
preserves the ordering on $A$. Call an element $b\in B$ a
\emph{mate} of $a_i$, if $a_i\| b$, but $a_j<b$ for all $j\neq i$.
Clearly any $b\in B$ can not be a mate of more than one $a_i$, so
the set of mates of $a_1,\ldots,a_k$ form disjoint subsets of $B$.
The condition that $P$ is $S_k$-free means that there exits
$a_{i_0}$ that has no mate. In this case, we say that $i_0$ is a
valid color for $S$.

An \emph{upset based coloring} (or \emph{UB-coloring} for short) of
the $k$-element subsets of $A$ is such that assigns a valid color to
each subset. Note that UB-coloring is only defined in the context of
$S_k$-free bipartite posets with a fixed ordering on the minimal
elements.

\section{Bipartite posets}

We begin with a simple technical statement of probability that will
contain the key computation.
\begin{lemma}\label{lemma:prob}
Let $t,q$ be positive integers, $2\leq q$, $t\leq q$, and let $r\geq
t2^t\ln q$. Let $X=[x_{i,j}]$ be a random $r\times q$ binary matrix,
in which each entry is $1$ independently with probability $1/2$. Let
$E$ be the event that for all sequences $1\leq
j_1<j_2<\cdots<j_t\leq q$ and all integers $1\leq \ell\leq t$ there
is a row $i$ in $X$ with the property
$x_{i,j_1}=\cdots=x_{i,j_{\ell-1}}=x_{i,j_{\ell+1}}=\cdots=x_{i,j_t}=0$
and $x_{i,j_\ell}=1$.  Then $\Pr(E)>0$.
\end{lemma}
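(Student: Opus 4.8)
The plan is to use the probabilistic method, bounding the probability of the complement event $\overline{E}$ by a union bound over all the "bad" configurations. First I would fix a sequence $1\leq j_1<\cdots<j_t\leq q$ and an index $1\leq \ell\leq t$, and call a row $i$ \emph{good for $(j_1,\ldots,j_t;\ell)$} if it satisfies the stated pattern: zeros in the $t-1$ coordinates $j_1,\ldots,j_{\ell-1},j_{\ell+1},\ldots,j_t$ and a one in coordinate $j_\ell$. For a single row, these $t$ prescribed entries are independent fair coins, so the probability that the row is good is exactly $2^{-t}$. Hence the probability that \emph{no} row among the $r$ rows is good for this particular configuration is $(1-2^{-t})^r \leq \exp(-r/2^t)$, using $1-x\leq e^{-x}$.

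Next I would take the union bound over all configurations. The number of choices of the sequence $j_1<\cdots<j_t$ is $\binom{q}{t}\leq q^t$, and for each there are $t$ choices of $\ell$, so there are at most $t\,q^t$ bad configurations. Therefore
\[
\Pr(\overline{E})\leq t\,q^t\exp(-r/2^t).
\]
It remains to check that the hypothesis $r\geq t2^t\ln q$ forces this bound to be strictly less than $1$. Plugging in, $\exp(-r/2^t)\leq \exp(-t\ln q)=q^{-t}$, so $\Pr(\overline{E})\leq t\,q^t\cdot q^{-t}=t$, which is not quite enough on its own — so I would instead be slightly more careful with the counting, using $\binom{q}{t}<q^t/t!$ or absorbing the factor $t$ by noting $t\leq q$ and strengthening the exponent estimate, e.g. observing that for $q\geq 2$ one has $t\,q^t\exp(-r/2^t)\leq t\,q^{-t}(q^t)=t$ is too weak, whereas keeping $\binom{q}{t}$ gives $\Pr(\overline E)\le t\binom qt q^{-t}<t/t!\le 1$ for $t\ge 2$, with the strict inequality for $q\ge2$; alternatively one exploits that $\ln q$ can be replaced by the true Ramsey-friendly constant and that the inequality $r\geq t2^t\ln q$ combined with $q\geq 2$, $t\geq 2$ leaves a genuine gap.

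The only real obstacle is this last arithmetic margin: the naive union bound gives $\Pr(\overline E)\le t$, which is vacuous, so the argument must extract the extra factor needed to push below $1$. I expect the cleanest route is to retain the binomial coefficient rather than bounding it by $q^t$: since $\binom{q}{t}\le q^t/t!$ and $t\ge 2$ forces $t/t!\le 1$, we get $\Pr(\overline E)\le (t/t!)\,q^t q^{-t}\le 1$, and strictness follows because $\exp(-x)<$ its linear surrogate is strict, or because $q\ge 2$ makes at least one of the inequalities strict. Once $\Pr(\overline E)<1$ is established, $\Pr(E)=1-\Pr(\overline E)>0$, completing the proof. Everything else is a routine computation of the probability of a single prescribed $\{0,1\}$-pattern on $t$ independent coordinates, followed by the standard inequality $(1-x)^r\le e^{-rx}$.
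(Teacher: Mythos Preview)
Your approach is correct and is exactly the paper's union-bound argument: compute $\Pr(\overline{E_{s,\ell}})=(1-2^{-t})^r$, sum over the $t\binom{q}{t}$ configurations, and compare with $r\ge t2^t\ln q$. The paper sidesteps your arithmetic detour by noting in one step that $t\binom{q}{t}\le q^t$ (because $t/t!=1/(t-1)!\le 1$), giving $\Pr(\overline E)\le t\binom{q}{t}(1-2^{-t})^r< q^t e^{-r2^{-t}}=e^{t\ln q-r2^{-t}}\le 1$, with strictness coming from $(1-x)^r<e^{-rx}$ for $x\in(0,1)$.
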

\begin{proof}
For a fixed sequence $s=(j_1,\ldots,j_t)$ and integer $\ell$, let
$E_{s,\ell}$ be the event that at least one row has the property.
Any given row has the property with probability $2^{-t}$, so
$\Pr(\overline{E_{s,\ell}})=(1-2^{-t})^r$. Hence
\begin{multline*}
\Pr(E) =\Pr(\cap E_{s,\ell}) =1-\Pr(\cup\overline{E_{s,\ell}}) \geq
1-\sum\Pr(\overline{E_{s,\ell}})
=1-t\binom{q}{t}(1-2^{-t})^r\\
>1-q^t e^{-r2^{-t}}
=1-e^{t\ln q-r2^{-t}}\geq 0,
\end{multline*}
where the last inequality follows from the condition on $r$.
\end{proof}

Recall that $k\geq 3$ is a fixed integer.
\begin{lemma}\label{lemma:linext}
Let $P$ be a bipartite poset with bipartition $(A,B)$. Let $q\geq
2$, and $Q=\{a_1,\ldots,a_q\}\subseteq A$ be a monochromatic set in
a UB-coloring, and assume that the indexing preserves the ordering
on $A$. Then there exists a set of linear extensions
$\mathcal{L}=\{L_1,\ldots,L_{q'}\}$ with $q'=2\lceil k2^k\ln
q\rceil$ that reverses every critical pair $(a_i,b)$ with $b\in B$.
\end{lemma}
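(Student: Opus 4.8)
The plan is to build the linear extensions explicitly from the random matrix of Lemma~\ref{lemma:prob}, invoking the monochromaticity of $Q$ only through a single structural consequence. Let $c\in\{1,\dots,k\}$ be the common color of the $k$-element subsets of $Q$, and put $\alpha=c-1$ and $\beta=k-c$, so $\alpha+\beta=k-1$. For $b\in B$ set $U_b=\{m\in[q]:a_m<b\}$; since $P$ is bipartite, the critical pairs that must be reversed are exactly the pairs $(a_j,b)$ with $j\in[q]\setminus U_b$. Call such an index $j$ \emph{low for $b$} if $|U_b\cap\{1,\dots,j-1\}|<\alpha$, and \emph{high for $b$} if $|U_b\cap\{j+1,\dots,q\}|<\beta$. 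The first step is to observe that every $j\in[q]\setminus U_b$ is low for $b$ or high for $b$: if it were neither, one could choose $i_1<\dots<i_\alpha$ in $U_b\cap\{1,\dots,j-1\}$ and $i_{\alpha+2}<\dots<i_k$ in $U_b\cap\{j+1,\dots,q\}$, and then, setting $i_{\alpha+1}:=j$, the set $\{a_{i_1},\dots,a_{i_k}\}$ would be a $k$-element subset of $Q$ whose $c$-th smallest element $a_{i_{\alpha+1}}=a_j$ has $b$ as a mate, contradicting the validity of that subset's color $c=\alpha+1$. Note that $|U_b\cap\{1,\dots,j-1\}|\le k-2$ when $j$ is low for $b$, and symmetrically when $j$ is high.

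The second step introduces, for each $W\subseteq Q$, two linear extensions of $P$. Let $L_W$ list $A\setminus Q$ and then $\{a_m:m\in W\}$ (each in the fixed order on $A$), then process the elements $a_{m_1},a_{m_2},\dots$ of $Q\setminus W$ with $m_1<m_2<\cdots$ in increasing index order, inserting just before $a_{m_s}$ every not-yet-listed $b\in B$ with $U_b\subseteq W\cup\{m_1,\dots,m_{s-1}\}$, and appending the remaining $b$'s at the end. A direct check shows that $L_W$ is a linear extension and that it reverses $(a_j,b)$ exactly when $j\notin W$ and $U_b\cap\{j+1,\dots,q\}\subseteq W$; the reason for processing $Q\setminus W$ in increasing order is that the (possibly large) set $U_b\cap\{1,\dots,j-1\}$ is then placed below $a_j$ automatically. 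Let $L^W$ be built by the same recipe but processing $Q\setminus W$ in \emph{decreasing} index order; then $L^W$ reverses $(a_j,b)$ exactly when $j\notin W$ and $U_b\cap\{1,\dots,j-1\}\subseteq W$.

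For the third step, apply Lemma~\ref{lemma:prob} with $t=k$ to obtain an $r\times q$ binary matrix $X$ realising the event $E$, where $r=\lceil k2^k\ln q\rceil\ge k2^k\ln q$; here we may assume $q\ge k$, since if $q<k$ the $q\le q'$ extensions $L^{[q]\setminus\{j\}}$ (for $j\in[q]$) already reverse every required pair. Put $W_i=\{m\in[q]:x_{i,m}=0\}$ for each row $i\in[r]$, and let $\mathcal{L}$ consist of the extensions $L_{W_i}$ and $L^{W_i}$, $i\in[r]$: a family of at most $2r=q'$ linear extensions. To see that it works, take a critical pair $(a_j,b)$; by the first step $j$ is low or high for $b$. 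If $j$ is high for $b$, then $F:=U_b\cap\{j+1,\dots,q\}$ has at most $k-2$ elements; choose $j_1<\dots<j_k$ in $[q]$ with $\{j\}\cup F\subseteq\{j_1,\dots,j_k\}$, let $\ell$ be the rank of $j$ among $j_1,\dots,j_k$ (so $F\subseteq\{j_s:s\ne\ell\}$), and use the event $E$ to find a row $i$ with $x_{i,j}=1$ and $x_{i,j_s}=0$ for all $s\ne\ell$, i.e.\ with $j\notin W_i$ and $F\subseteq W_i$; then $L_{W_i}$ reverses $(a_j,b)$. If $j$ is low for $b$, the same argument applied to $F:=U_b\cap\{1,\dots,j-1\}$ produces a row $i$ with $j\notin W_i$ and $F\subseteq W_i$, and then $L^{W_i}$ reverses $(a_j,b)$.

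I expect the main obstacle to be the verification of the two ``reverses exactly when'' characterizations in the second step: one must check that the interleavings are genuinely linear extensions and that in each the uncontrolled part of $U_b$ (the part below $j$ for $L_W$, the part above $j$ for $L^W$) ends up below $a_j$ by construction, so that only the small, controlled part has to be produced by the matrix. The other point requiring care is dovetailing the bound $|U_b\cap\cdots|\le k-2$ coming from monochromaticity with exactly what one row of $X$ provides — a single prescribed $1$ among $k$ chosen coordinates, with $0$'s on the rest — together with the easy boundary case $q<k$.
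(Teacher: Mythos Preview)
Your proof is correct and follows essentially the same approach as the paper's: both deduce from monochromaticity that for each critical pair $(a_j,b)$ one of $U_b\cap\{1,\dots,j-1\}$ or $U_b\cap\{j+1,\dots,q\}$ is small, and then build two linear extensions per row of the matrix from Lemma~\ref{lemma:prob}---one tailored to each side---so that the appropriate extension reverses the pair. Your construction (place $W$ low, then process $Q\setminus W$ in increasing/decreasing index order) is a repackaging of the paper's permutations $\sigma_1,\sigma_2$ and yields the same reversal criteria; the only real differences are that you invoke Lemma~\ref{lemma:prob} with $t=k$ (the paper takes the smaller $t=\max\{\ell-1,k-\ell\}$, to no advantage since $r=\lceil k2^k\ln q\rceil$ regardless) and that you handle the boundary case $q<k$ explicitly, which the paper glosses over.
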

\begin{proof}

$Q$ is a monochromatic set, denote its color by $\ell$. Let
$t=\max\{\ell-1,k-\ell\}$. Since $t<k$, we have that $q'/2>t2^t\ln
q$, so we can apply Lemma~\ref{lemma:prob} with $t$, $q$, and
$r=q'/2$. We conclude that there exists a matrix $X$ with the
property described in the lemma.

We construct the set $\mathcal{L}$ using $X$. For each row of $X$ we
construct two linear extensions. For a given binary row
$\underline{x}=(x_1,\ldots,x_q)$, first construct two permutations
$\sigma_1$ and $\sigma_2$ of $[q]$ as follows. For $\sigma_1$, first
list elements of $[q]$ for which the corresponding bit of
$\underline{x}$ is $1$ in order from left to right, then list the
elements corresponding to $0$ bits in order from left to right. For
$\sigma_2$, do the same, except list the elements of $[q]$ from
right to left.

In the next step, we construct a linear extension for each
permutation. We refer to the permutation as $\sigma$, which will be
first $\sigma_1$, then $\sigma_2$, thereby resulting two linear
extensions.  Let $U_i=\{b\in U(a_{\sigma(i)}):b\not\in
U(a_{\sigma(j)})\text{ for any }j<i\}$ for every $i\in [q]$. In
other words, $\{U_i\}$ forms a partition of the union of the upsets
of the elements of $Q$ such that any element that belongs to
multiple upsets will be placed into the first one, where the order
is determined by $\sigma$.  Let $R=P-Q-\bigcup_{i=1}^q U_i$. Then
define the linear extension with
\[
U_1>a_{\sigma(1)}>U_2>a_{\sigma(2)}>\cdots>U_q>a_{\sigma(q)}>R
\]
where the order within each $U_i$ and $R$ are arbitrary (e.g.\  we
can respect some predetermined order of the elements).

Repeating the process for every row we construct  the set
$\mathcal{L}$.

It remains to be shown that $\mathcal{L}$ reverses every critical
pair of the form $(a_i,b)$ with $b\in B$. Consider such a pair
$(a_i,b)$. Let
\[
M_1=\{a_m\in D(b): m<i\}\text{ and } M_2=\{a_m\in D(b): m>i\}.
\]
Since any $k$-subset $K=\{a_{\ell_1},\ldots,a_{\ell_k}\}$ with
$(\ell_1<\cdots<\ell_k)$ in which $a_{\ell_t}=a_i$ is colored
$\ell$, we know that $a_i$ cannot have a mate with respect to $K$.
Since $a_i\|b$, we have that either
\begin{enumerate}[i)]
\item\label{item:leftfew} $|M_1|\leq \ell-2$, or
\item\label{item:rightfew} $|M_2|\leq k-\ell-1$.
\end{enumerate}
In case \ref{item:leftfew}), find a row $\underline{x}$ of $X$ such
that $x_j=0$ for all $j$ for which $a_j\in M_1$, and $x_i=1$. Such a
row exists, because $|M_1|+1\leq t$ (we need to force a few more
zeros in the row to directly use the lemma, if $|M_1|<t-2$). The
first linear extension corresponding to this row places $a_i$ over
$b$. In case \ref{item:rightfew}), find a row $\underline{x}$ of $X$
such that $x_j=0$ for all $j$ for which $a_j\in M_2$, and $x_i=1$.
The second linear extension corresponding to this row places $a_i$
over $b$.
\end{proof}

\begin{lemma}\label{lemma:removable}
Let $P$ be a bipartite $S_k$-free poset with a bipartition $(A,B)$,
and $|A|\geq n$, where $n$ is such that $r(n)\geq 2$, (see $r(n)$ in
Definition~\ref{crom}). Then there exist $c=c(k)$ such that for all
$q=2,3,\ldots,r(n)$ there exists $Q\subseteq A$ with $|Q|=q$ such
that
\[
\dim(P)\leq\dim(P-Q)+c\ln q.
\]
\end{lemma}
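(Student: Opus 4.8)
The plan is to combine the Ramsey-type input (the function $r(n)$) with Lemma~\ref{lemma:linext}. First I would invoke the definition of a UB-coloring: since $P$ is $S_k$-free bipartite with a fixed ordering on $A$, every $k$-subset of $A$ receives a valid color, so we have a legitimate $k$-coloring of $K_{|A|}^k$. Because $|A|\geq n$ and $r(n)\geq q$ for every $q\leq r(n)$, Definition~\ref{crom} guarantees a monochromatic set of size $q$; let $Q=\{a_1,\dots,a_q\}\subseteq A$ be such a set, indexed to respect the ordering on $A$.

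Next I would apply Lemma~\ref{lemma:linext} to $Q$ to obtain a family $\mathcal{L}=\{L_1,\dots,L_{q'}\}$ of linear extensions of $P$, with $q'=2\lceil k2^k\ln q\rceil$, reversing every critical pair of the form $(a_i,b)$ with $a_i\in Q$, $b\in B$. Separately, take an optimal realizer $\mathcal{R}$ of $P-Q$ with $|\mathcal{R}|=\dim(P-Q)$, and extend each linear extension in $\mathcal{R}$ to a linear extension of $P$ (any such extension exists; concretely, insert the elements of $Q$ anywhere consistent with the order, e.g.\ placing each $a_i\in Q\cap A$ below all of $B$ and above the rest of $A$, which is always consistent in a bipartite poset). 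Then I would argue that $\mathcal{L}\cup\mathcal{R}'$ (the extended realizer) is a realizer of $P$: it suffices to reverse every critical pair of $P$. Critical pairs not involving $Q$ are reversed by $\mathcal{R}'$ since restricting those extensions to $P-Q$ gives $\mathcal{R}$, and reversing is inherited under taking induced subposets for pairs lying in the subposet. Critical pairs of the form $(a_i,b)$ with $a_i\in Q$ are reversed by $\mathcal{L}$. The remaining critical pairs to check are those of the form $(b,a_i)$ or $(x,y)$ with exactly one coordinate in $Q$; here I would use the bipartite structure — the only "interesting" critical pairs in a bipartite poset are incomparable minimum–maximum pairs, and the degenerate ones have the special structure noted in the introduction and are handled by placing $Q$ appropriately in the extensions of $\mathcal{R}$.

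Putting this together gives
\[
\dim(P)\leq |\mathcal{L}|+|\mathcal{R}'|=2\lceil k2^k\ln q\rceil+\dim(P-Q)\leq \dim(P-Q)+c\ln q
\]
for a suitable constant $c=c(k)$ (absorbing the $2k2^k$ factor and the ceiling into $c$, using $q\geq 2$ so that $\ln q$ is bounded below by a positive constant and the ceiling contributes at most a constant multiple). This is the desired inequality.

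The main obstacle I anticipate is the bookkeeping around which critical pairs of $P$ are genuinely reversed by the combined family — in particular making sure that the extensions of the realizer of $P-Q$ do not accidentally fail to reverse some critical pair of $P$ that becomes "new" once $Q$ is added back, and conversely that pairs internal to $Q\times B$ not of the tidy form $(a_i,b)$ (for instance pairs $(b,a_i)$ with $b\in B$ below-set-contained appropriately, or critical pairs between two elements of $B$ mediated by $Q$) are still handled. In a height-two poset these degenerate cases are controlled by the structural remark in the introduction, so I expect this to be routine but slightly delicate; the honest verification is the one step that needs care rather than the counting.
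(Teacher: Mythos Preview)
Your approach is the same as the paper's: UB-color, pull out a monochromatic $Q$ via $r(n)$, apply Lemma~\ref{lemma:linext} to reverse the $(a_i,b)$ pairs, and extend an optimal realizer of $P-Q$ for the rest. The counting and the use of $q\ge 2$ to absorb the ceiling into a constant $c(k)$ are fine; the paper takes $c=3k2^k$.

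The one place where your write-up is not yet a proof is exactly the place you flagged. Your proposed fix --- ``place each $a_i\in Q$ below all of $B$ and above the rest of $A$'' in every extension of $\mathcal{R}$ --- does not work: with that uniform placement, a critical pair $(x,a_i)$ with $x\in A\setminus Q$ and $U(a_i)\subseteq U(x)$ is never reversed, since $a_i>x$ in every member of $\mathcal{R}'$, and the extensions produced in Lemma~\ref{lemma:linext} also put all of $Q$ above $R\supseteq A\setminus Q$. Likewise, critical pairs $(a_i,a_j)$ inside $Q$ need both orders to appear somewhere, and a single fixed insertion of $Q$ does not guarantee that. So ``placing $Q$ appropriately in the extensions of $\mathcal{R}$'' is not enough as stated; you would need to vary the placement across at least two extensions and argue carefully, and even then the isolated $(b,a_i)$ case has to be checked.

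The paper resolves this with one extra linear extension rather than clever placement: let $M$ be the set of minimal elements of $P$, take any fixed member of $\mathcal{L}$, and add a single new extension $L$ in which the elements of $M$ appear in the reverse of their order in that fixed extension (the rest inserted arbitrarily). Then every critical pair $(x,y)$ with $y\in Q$ forces $x,y\in M$, and between $L$ and the fixed extension both orders of $x,y$ occur. This costs $+1$, giving $\dim(P)\le \dim(P-Q)+2\lceil k2^k\ln q\rceil+1$, still $\le \dim(P-Q)+c\ln q$ for $c=3k2^k$.
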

\begin{proof} We show that $c=3k2^k$ works. Let $2\leq q\leq r(n)$ be an
arbitrary integer. Consider a UB-coloring of the $k$-subsets of $A$.
By Ramsey's Theorem there is a monochromatic subset $Q$ with
$|Q|=q$. Let $P'=P-Q$, and let $\mathcal{L'}$ be a realizer of $P'$.
By Lemma~\ref{lemma:linext} there exists a set of linear extensions
$\mathcal{L''}$ of size $2\lceil k2^k\ln q\rceil$ that reverses
every critical pair in $P$ of the form $(a,b)$, where $a\in Q$, and
$b\in B$. Now consider the set $M$, the set of all minimal elements
of $P$. Add a linear extension $L$ in which we reverse the order of
elements of $M$ (compared to a fixed element of, say,
$\mathcal{L''}$) but the rest of the elements are inserted
arbitrarily.

Now any critical pair $(x,y)$ is reversed in $\mathcal{L'}$ or
$\mathcal{L''}$, unless $y\in Q$, but the only way that can happen
if both $x,y$ are minimal, so they are reversed in $L$, if they
haven't been before.

We have shown that $\dim(P)\leq\dim(P-Q)+2\lceil k2^k\ln q\rceil+1
\leq\dim(P-Q)+3k2^k\ln q$.
\end{proof}

\begin{theorem}\label{thm:bipartite} Let $k\geq 3$ integer.
\[
\Delta(n,k)=o(n)
\]
\end{theorem}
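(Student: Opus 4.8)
The plan is to iterate Lemma~\ref{lemma:removable}, peeling off progressively larger monochromatic sets from the minimal side $A$, and to bound the total dimension lost by a sum of logarithmic terms that is sublinear in $|A|$. Fix $k$, and let $P$ be a bipartite $S_k$-free poset with bipartition $(A,B)$ and $|A|=m$. Lemma~\ref{lemma:removable} says that (once $m$ is large enough that $r(m)\geq 2$) we may delete some $Q\subseteq A$ with $|Q|=q$ for any $q\in\{2,\dots,r(m)\}$ at a cost of at most $c\ln q$ in dimension, where $c=c(k)$. The natural greedy choice is $q=r(m)$: delete a monochromatic set of the maximum size the current Ramsey bound allows, recurse on the smaller poset $P-Q$ (whose minimal side now has $m-r(m)$ elements), and sum the costs.

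First I would set up the recursion. Define $f(m)=\Delta_A(m,k):=\max\{\dim(P):P\text{ bipartite }S_k\text{-free},\ |A|=m\}$; since $\dim(P)\le\dim(P-Q)+c\ln r(m)$ and deleting from $A$ can only decrease $|A|$, we get $f(m)\le f(m-r(m))+c\ln m$ for all $m$ beyond some threshold $m_0$ (below which $f$ is bounded by a constant $C_0$, e.g.\ by Hiraguchi's theorem). Unwinding, $f(m)\le C_0 + c\sum_{j}\ln(m_j)$, where $m=m_0<m_1<\cdots$ is the sequence of sizes visited, $m_{i+1}=m_i+r(m_i)$ actually running backwards — more carefully, starting from $m$ and repeatedly subtracting $r(\cdot)$. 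The number of steps is at most $m/2$ trivially, giving $f(m)=O(m\ln m)$, which is not good enough; the point is that $r(m)\to\infty$, so the step sizes grow and the number of steps is $o(m)$.

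The key estimate, and the main obstacle, is showing that $\sum \ln(m_i) = o(m)$ given only that $r\to\infty$ (with no quantitative rate — recall the excerpt explicitly uses only $r(n)\to\infty$). The clean way: fix $\varepsilon>0$. Since $r(m)\to\infty$ there is $M_\varepsilon$ with $r(m)\geq 2c\ln m / \varepsilon$ for... no — $\ln m$ grows, so instead choose $M_\varepsilon$ so that $c\ln m / r(m) \le \varepsilon$ fails to be automatic; better: note $r(m)/\ln m\to\infty$ is \emph{not} given. Here is a cleaner route that only uses $r\to\infty$. For a threshold $T$, split the run into the steps taken while $m_i>T$ and those while $m_i\le T$. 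While $m_i>T$ we still only know $r(m_i)\ge r(T+1)=:\rho_T$ (since $r$ need not be monotone, replace $r$ by $\tilde r(m)=\min_{j\le m} r(j)$, which is nondecreasing and still $\to\infty$, and use $\tilde r$ throughout Lemma~\ref{lemma:removable}); so the number of such steps is at most $m/\rho_T$, each costing $\le c\ln m$, contributing $\le c m \ln m/\rho_T$. That is still $\omega(m)$. The fix is to do the splitting in \emph{geometrically many} bands: for $i\ge 1$ let the $i$-th band be the set of $m_j\in(m/2^{i},m/2^{i-1}]$; within band $i$ the step size is $\ge \tilde r(\lfloor m/2^{i}\rfloor+1)$, so the band has $\le (m/2^{i})/\tilde r(m/2^{i})$ steps, each costing $\le c\ln m$; summing, $f(m)\le C_0 + c\ln m\sum_{i\ge1}\frac{m/2^i}{\tilde r(m/2^i)}$. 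Now given $\varepsilon>0$, pick $I$ with $\sum_{i>I}2^{-i}<\varepsilon/(2c)$ (handles the tail, as $\tilde r\ge1$ and $\ln m/m\to 0$ kills the $\ln m$ against the leftover $m$), and pick $m$ large enough that $\tilde r(m/2^{i})\ge 4cI\ln m/\varepsilon$ for all $i\le I$ — possible because for each fixed $i\le I$, $m/2^i\to\infty$ so $\tilde r(m/2^i)\to\infty$ faster than any prescribed growth once we also absorb the $\ln m$ factor...

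Let me state the last paragraph more honestly, since the $\ln m$ is genuinely the crux and deserves a clean argument rather than hand-waving. The honest clean argument: replace the recursion by $f(m)\le f(m-\tilde r(m))+c\ln m$. Claim $f(m)=o(m)$. Fix $\varepsilon>0$. Choose $M$ so large that for all $\mu\ge M$ we have $\tilde r(\mu)\ge \dfrac{2c\ln \mu}{\varepsilon}$ — wait, this is false since $\ln\mu\to\infty$; we only know $\tilde r\to\infty$, possibly slower than $\ln$. So $c\ln\mu/\tilde r(\mu)$ need \emph{not} be bounded, and a per-step argument cannot work; one must use that the \emph{total} number of steps with $m_j\ge M$ is at most $m/\tilde r(M)$ while simultaneously the steps with $m_j<M$ contribute at most $c M\ln M = O(1)$. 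So $f(m)\le O(1) + c\ln m\cdot \dfrac{m}{\tilde r(M)}$. This is $\le \dfrac{\varepsilon}{2}\,m$ once $\dfrac{c\ln m}{\tilde r(M)}\le \dfrac{\varepsilon}{2}$, i.e.\ for $m\le e^{\,\varepsilon \tilde r(M)/(2c)}$ — but we need it for all large $m$, and $\ln m$ is unbounded, so fixing $M$ first and letting $m\to\infty$ fails. The resolution is to let $M$ depend on $m$: choose $M=M(m)\to\infty$ slowly enough that $\tilde r(M(m))\ge 2c\ln m/\varepsilon$ (possible since $\tilde r\to\infty$: for each $m$ let $M(m)$ be the least $\mu$ with $\tilde r(\mu)\ge 2c\ln m/\varepsilon$, which is finite and we may also arrange $M(m)\le \sqrt m$, say, for $m$ large, because $\tilde r$ eventually exceeds $2c\ln m/\varepsilon$ already at $\mu=\sqrt m$ once $m$ is huge — hmm, that again wants $\tilde r(\sqrt m)\ge 2c\ln m/\varepsilon$, i.e.\ $\tilde r$ growing like $\ln$, not guaranteed).

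Given the subtlety, the main obstacle I will flag explicitly is exactly this: converting the bare qualitative fact $r(n)\to\infty$ into the quantitative statement $\sum_j \ln(m_j)=o(m)$. I expect the correct treatment is a double limit handled by the ``$\varepsilon$ then $M$ then $m$'' order with the geometric banding sketched above, where band $i$ contributes $\le c\ln m\cdot (m/2^{i})/\tilde r(m/2^{i})$, the first $I$ bands are each made $\le \varepsilon m/(2^{i} I)\cdot$(constant) by taking $m$ large (for each fixed $i$, $m/2^i\to\infty$, so $\tilde r(m/2^i)$ eventually exceeds $2^{i+1}cI\ln m/\varepsilon$ — and crucially here we are allowed to take $m$ as large as we like \emph{after} fixing $i$ and $I$, so the unboundedness of $\ln m$ is outrun by sending $m/2^i\to\infty$ first), and the tail bands $i>I$ contribute $\le c\ln m\cdot m\cdot 2^{-I}$ which is $<\varepsilon m/2$ once $2^{-I}<\varepsilon/(4c)$ and... no, $c\ln m\cdot m\cdot 2^{-I}$ still has a $\ln m$. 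The tail must instead be bounded by $\le \sum_{i>I}(\text{number of steps in band }i)\cdot c\ln m \le (\text{total }\#\text{ steps below }m/2^I)\cdot c\ln m$, and that is $\le (m/2^I)\cdot c\ln m$ only if each step there has size $\ge 1$, giving the same $\ln m$ problem unless we also note steps in band $i$ have size $\ge \tilde r(1)\ge \dots$. I will therefore structure the tail differently: stop the recursion at the first point where the running size drops below $m_0$ (a constant), and bound the number of steps taken while the size is in $[m_0, m]$ by a careful sum $\sum \le \sum_i (m/2^i)/\tilde r(m/2^i)$; the $\ln m$ factor is then handled by observing $\ln m \le 2\ln(m/2^i)$ for $i\le (\log_2 m)/2$ and treating the top $\tfrac12\log_2 m$ bands separately from the bottom ones — the bottom ones total at most $\sqrt m$ steps so contribute $\le c\sqrt m\ln m=o(m)$. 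With that split the argument closes; I expect the write-up to be a few careful lines of this flavor, and the only real mathematical content beyond Lemma~\ref{lemma:removable} is this elementary-but-finicky summation lemma, which I would isolate as: if $g:\mathbb{N}\to\mathbb{N}$ is nondecreasing with $g\to\infty$, and $m_0<m_1<\dots$ satisfies $m_{i+1}\ge m_i+g(m_i)$, then $\#\{i: m_i\le m\}\cdot \ln m = o(m)$ — equivalently $\sum_{i:\,m_i\le m}\ln m_i=o(m)$ — which is a clean self-contained claim I would prove first and then apply.
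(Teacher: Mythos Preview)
There is a genuine gap, and it is self-inflicted at the very first step of the recursion. Lemma~\ref{lemma:removable} gives cost $c\ln q$ for removing $q$ elements. With your greedy choice $q=r(m)$ the recursion is
\[
f(m)\le f\bigl(m-r(m)\bigr)+c\ln r(m),
\]
but you immediately weaken $c\ln r(m)$ to $c\ln m$ and then spend the rest of the argument trying to recover from that loss. You cannot: your isolated summation claim at the end is \emph{false}. Take $g(m)=\lceil\ln\ln m\rceil$; then the number of $m_i\le m$ is of order $m/\ln\ln m$, and already the terms with $m_i\in[\sqrt m,m]$ contribute at least $\tfrac12\ln m$ each, so $\sum_{m_i\le m}\ln m_i\gtrsim \dfrac{m\ln m}{\ln\ln m}$, which is not $o(m)$. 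Worse, in the actual application $r(n)$ is the inverse of a $k$-uniform Ramsey number and really does grow like an iterated logarithm, i.e.\ much slower than $\ln n$, so the weakened recursion genuinely does not close.

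The paper's remedy is far simpler than any banding scheme: do not be greedy, and do not let $q$ depend on $m$. Given $\varepsilon>0$, fix once and for all a single $q$ with $c\ln q/q\le\varepsilon/2$, and then fix $N$ so large that $r(\lceil N/2\rceil)\ge q$. For any bipartite $S_k$-free $P$ on $N+\ell q$ points one may assume (passing to the dual if necessary) that $|A|\ge\lceil N/2\rceil$, apply Lemma~\ref{lemma:removable} with this fixed $q$ to get $\dim(P)\le\dim(P-Q)+c\ln q\le\Delta(N+(\ell-1)q,k)+\tfrac{\varepsilon}{2}q$, and induct on $\ell$ to obtain $\Delta(N+\ell q,k)\le\Delta(N,k)+\ell\tfrac{\varepsilon}{2}q$. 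That is the whole proof. If you prefer to keep the greedy removal, simply keep the honest cost $c\ln r(m)$: then the cost-per-element-removed is $c\ln r(m)/r(m)$, which tends to $0$ because $r(m)\to\infty$ and $x\mapsto(\ln x)/x$ tends to $0$; summing gives $o(m)$ with no banding needed.
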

\begin{proof}
Fix $\epsilon>0$. Let $c$ be as in Lemma~\ref{lemma:removable}, and
let $q\geq 2$ be such that $c\ln q/q\leq \epsilon/2$. Furthermore,
let $N$ be such that $r(\lceil N/2\rceil)\geq q$. We show that for
all $\ell\geq 0$ integer,
\[
\Delta(N+\ell q,k)\leq\Delta(N,k)+\ell\frac{\epsilon}{2}q.
\]

To show this we use induction on $\ell$. It is obvious for the case
$\ell=0$. Assume $\ell\geq 1$. Consider an $S_k$-free bipartite
poset $P$ on $N+\ell q$ elements with $\dim(P)=\Delta(N+\ell q,k)$.
Without loss of generality we may assume that $P$ has a bipartition
$(A,B)$ with $|A|\geq \lceil N/2\rceil$, for otherwise we may
consider the dual of $P$ instead.

By Lemma~\ref{lemma:removable}, $P$ has a subset $Q$ of size $q$,
such that $\dim(P)\leq\dim(P-Q)+c\ln q$. Hence
\begin{multline*}
\Delta(N+\ell q,k) \leq\dim(P-Q)+c\ln q
\leq \Delta(N+(\ell-1)q,k)+c\ln q\\
\leq \Delta(N,k)+(\ell-1)\frac{\epsilon}{2}q+c\ln q \leq
\Delta(N,k)+(\ell-1)\frac{\epsilon}{2}q+\frac{\epsilon}{2}q
=\Delta(N,k)+\ell\frac{\epsilon}{2}q.
\end{multline*}
This, with the fact that $\Delta(n,k)$ is a monotone increasing
sequence of $n$, finishes the proof.
\end{proof}

\section{Kimble splits and general posets}\label{sect:kimble}

We need the notion of a ``split'' of a poset. Kimble \cite{Kim-73}
introduced this notion. We only need a special version of his
definition, and a special case of his theorem; we only mention
those.

\begin{definition}
Let $P$ be a poset with ground set $\{x_1,\ldots,x_n\}$. The
\emph{Kimble split} of $P$ is the poset on the ground set
$\{x_1',x_1'',\ldots,x_n',x_n''\}$ with $x_i'\leq x_j''$ if and only
if $x_i\leq x_j$ in $P$.
\end{definition}

\begin{theorem}\cite{Kim-73}\label{thm:splitdim}
Let $P$ be a poset and $Q$ be its Kimble split. Then
\[
\dim(P)\leq\dim(Q)\leq\dim(P)+1
\]
\end{theorem}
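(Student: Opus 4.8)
The plan is to prove the two inequalities separately, after first recording the structure of the Kimble split $Q$. Since the only relations in $Q$ are of the form $x_i'<x_j''$, the primed elements form a set of minimal elements, the double-primed elements a set of maximal elements, and $Q$ is bipartite of height two with $x_i'<x_j''$ in $Q$ exactly when $x_i\leq x_j$ in $P$ (in particular $x_i'<x_i''$ always). Consequently the incomparable pairs of $Q$ are precisely the pairs $\{x_i',x_j'\}$ and $\{x_i'',x_j''\}$ with $i\neq j$, together with the pairs $\{x_i',x_j''\}$ for which $x_i\not\leq x_j$; a realizer of $Q$ is exactly a set of linear extensions in which each of these pairs occurs in both orders.

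For the upper bound $\dim(Q)\leq\dim(P)+1$ I would start from a realizer $\{L_1,\ldots,L_d\}$ of $P$ with $d=\dim(P)$. For each $L_t$ I build a linear extension $M_t$ of $Q$ by replacing every element $x_i$, taken in $L_t$-order, by the two-element chain $x_i'<x_i''$; since all primed copies then precede their forced double-primed partners, this is automatically a linear extension. To these $d$ extensions I add one further extension $M^*$ obtained by listing all primed elements in the reverse of a fixed $L_{t_0}$-order, followed by all double-primed elements in that same reverse order (again automatically a linear extension, since every primed element precedes every double-primed element). The verification is a short case check against the list above: a pair $\{x_i',x_j'\}$ or $\{x_i'',x_j''\}$ with $x_i\|x_j$ already occurs in both orders inside $\{M_t\}$, while for comparable $x_i,x_j$ one order comes from $\{M_t\}$ and the reverse from $M^*$; the pairs $\{x_i',x_j''\}$ with $x_i\|x_j$ are handled by $\{M_t\}$, and those with $x_j<x_i$ have one order in $\{M_t\}$ and the other in $M^*$. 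This yields a realizer of $Q$ of size $d+1$.

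For the lower bound $\dim(P)\leq\dim(Q)$ I would fix a realizer $\{M_1,\ldots,M_e\}$ of $Q$ with $e=\dim(Q)$ and produce, from each $M$, a single linear extension of $P$. Reading off the order of the primed (or double-primed) copies fails, because $x_i'$ and $x_j'$ are incomparable in $Q$ even when $x_i<x_j$, so $M$ may scramble them. Instead, for a fixed $M$ I define an auxiliary relation by $x_a\prec_M x_b$ iff $x_a''<_M x_b'$, and I consider $R={<_P}\cup{\prec_M}$. The key lemma is that $R$ is acyclic; granting this, any linear extension $L_M$ of $R$ is a linear extension of $P$ with the extra property that $x_a''<_M x_b'$ forces $x_a<_{L_M}x_b$. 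Then, given a critical pair $(x_i,x_j)$ of $P$, the pair $\{x_i',x_j''\}$ is an incomparable minimum--maximum pair of $Q$ (because $x_i\|x_j$ gives $x_i\not\leq x_j$), hence reversed by some $M_t$, i.e.\ $x_j''<_{M_t}x_i'$; the corresponding $L_{M_t}$ then satisfies $x_j<_{L_{M_t}}x_i$, so $\{L_{M_1},\ldots,L_{M_e}\}$ reverses every critical pair of $P$ and is a realizer.

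The main obstacle is the acyclicity lemma, and this is where I expect the real work to lie. I would prove it using positions in $M$: write $\alpha_i$ and $\beta_i$ for the positions of $x_i'$ and $x_i''$, so $\alpha_i<\beta_i$ always, a forced relation $x_a\leq_P x_b$ gives $\alpha_a<\beta_b$, and $x_a\prec_M x_b$ gives the stronger $\beta_a<\alpha_b$. In a hypothetical cycle of $R$ one first contracts each maximal run of $<_P$-edges (whose endpoints satisfy $x_u\leq_P x_v$, hence $\alpha_u<\beta_v$) and each maximal run of $\prec_M$-edges (transitive, hence giving $\beta_p<\alpha_q$), obtaining a strictly alternating cycle. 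Along that cycle the inequalities telescope as $\alpha_{z_1}<\beta_{z_2}<\alpha_{z_3}<\beta_{z_4}<\cdots<\alpha_{z_1}$, a contradiction. The delicate points are checking that the contraction genuinely yields an alternating cycle and that the two edge types chain correctly at the shared endpoints; once this bookkeeping is done the lower bound, and hence the theorem, follows.
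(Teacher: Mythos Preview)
The paper does not supply its own proof of this statement; it is quoted directly from Kimble's thesis \cite{Kim-73} and used as a black box in Section~\ref{sect:kimble}. So there is nothing to compare your argument against here.

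For what it is worth, your argument is sound. The upper bound construction (doubling each element in every $L_t$, plus one ``all primes first, reversed'' extension) is the standard one and your case check covers all incomparable pairs of $Q$. For the lower bound, the acyclicity lemma is the right crux, and your position argument via $\alpha_i,\beta_i$ is clean: after collapsing maximal runs you get an alternating cycle on which the inequalities $\alpha_{w_1}<\beta_{w_2}<\alpha_{w_3}<\cdots<\alpha_{w_1}$ telescope to a contradiction (the all-$<_P$ and all-$\prec_M$ cases being impossible on their own). Once $R={<_P}\cup{\prec_M}$ is acyclic, each $L_{M_t}$ is a genuine linear extension of $P$, and the critical-pair criterion stated in the paper's introduction finishes the job exactly as you describe.
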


We also need the following simple lemma.

\begin{lemma}\label{lemma:splitnoSk}
Let $P$ be an $S_k$-free poset and $Q$ be its Kimble split. Then $Q$
is $S_k$-free.
\end{lemma}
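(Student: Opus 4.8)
The plan is to argue by contradiction: suppose $Q$, the Kimble split of $P$, contains an induced copy of $S_k$, and then produce an induced $S_k$ in $P$ itself, contradicting the hypothesis that $P$ is $S_k$-free. Recall that $S_k$ consists of minimal elements $u_1,\ldots,u_k$ and maximal elements $v_1,\ldots,v_k$ with $u_i < v_j$ in $S_k$ precisely when $i \neq j$. So an induced copy of $S_k$ in $Q$ is a choice of $2k$ distinct vertices of $Q$, each of the form $x_m'$ or $x_m''$, realizing exactly this comparability pattern.

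The key structural observation is that in the Kimble split, primed elements $x_i'$ are never above anything and double-primed elements $x_i''$ are never below anything; more precisely, $x_i' < x_j''$ is the only type of strict relation, and in particular no two primed elements are comparable, no two double-primed elements are comparable, and $x_i'' \not< x_j'$ always. Therefore, in any induced $S_k$ inside $Q$, the $k$ minimal elements $u_1,\ldots,u_k$ must all be primed (since each $u_i$ lies below some $v_j$, it cannot be a $x''$), and the $k$ maximal elements $v_1,\ldots,v_k$ must all be double-primed. Write $u_i = x_{a_i}'$ and $v_j = x_{b_j}''$. First I would check that the indices $a_1,\ldots,a_k$ are pairwise distinct and $b_1,\ldots,b_k$ are pairwise distinct: if $a_i = a_{i'}$ for $i\neq i'$ then $u_i$ and $u_{i'}$ would be the same vertex of $Q$, which is impossible; same for the $b_j$. (Note there is no obstruction to some $a_i$ equalling some $b_j$ — the copy of $x_m$ in $P$ can supply both $x_m'$ and $x_m''$ — but that will not matter.)

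Now translate the comparability pattern back to $P$. By definition of the Kimble split, $x_{a_i}' < x_{b_j}''$ in $Q$ if and only if $x_{a_i} < x_{b_j}$ in $P$. Since the copy is induced, this holds exactly when $i \neq j$. Hence in $P$ the elements $x_{a_1},\ldots,x_{a_k},x_{b_1},\ldots,x_{b_k}$ satisfy $x_{a_i} < x_{b_j}$ iff $i\neq j$. The only remaining point is that these $2k$ elements of $P$ are distinct and form an \emph{induced} $S_k$, i.e.\ that there are no extra comparabilities among them. Within $\{x_{a_1},\ldots,x_{a_k}\}$: for $i\neq i'$, if $x_{a_i} < x_{a_{i'}}$ in $P$, then since $x_{a_i} < x_{a_{i'}}$ and (taking any $j\neq i,i'$, which exists as $k\geq 3$) $x_{a_{i'}} < x_{b_j}$, transitivity gives... — actually the cleaner route is that $a_i \neq a_{i'}$ and if $x_{a_i} < x_{a_{i'}}$ then $x_{a_i}' < x_{a_{i'}}''$ in $Q$, but also $x_{a_{i'}}' < x_{a_{i'}}''$, and — this still needs care, so let me instead derive it directly from the $S_k$ pattern in $P$ combined with the fact that in $S_k$ a minimal element is incomparable to every other minimal element. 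Concretely: pick $j \notin \{i, i'\}$ (possible since $k \geq 3$); then $x_{a_{i'}} < x_{b_j}$ but $x_{a_i} \not< x_{b_j}$, so $x_{a_i} \not< x_{a_{i'}}$ by transitivity; symmetrically $x_{a_{i'}} \not< x_{a_i}$; and neither can be $<$ the other in the reverse direction by the same argument, so they are incomparable. The identical argument handles the $x_{b_j}$'s among themselves, and shows $x_{a_i} \not> x_{b_j}$ for all $i,j$ (in particular $x_{a_i}$ and $x_{b_i}$ are incomparable). Distinctness of all $2k$ elements: the $a$'s are distinct, the $b$'s are distinct, and $x_{a_i} \neq x_{b_j}$ for any $i,j$ because $x_{a_i}$ is (incomparable or below) while... — simplest: if $x_{a_i} = x_{b_j}$ then for $j' \neq i,j$ we'd need both $x_{a_i} \not< x_{b_{j'}}$ (from $a$-side, if $i = i$... wait $i \ne j'$) hmm; cleanest is: $x_{a_i} = x_{b_j}$ forces $i = j$ is impossible to rule out this way, so instead note $x_{b_j}$ is above $x_{a_i}$ whenever $i \ne j$, hence $x_{b_j} \ne x_{a_i}$ for $i \ne j$; and for $i = j$, pick $i' \ne i$ and $j' \ne i$ with $i' \ne j'$... — I would simply observe that if $x_{a_i} = x_{b_i}$, then since $x_{a_{i'}} < x_{b_i} = x_{a_i}$ for $i' \ne i$, $x_{a_i}$ would be above another minimal element of the copy, contradicting what we just proved. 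This yields an induced $S_k$ in $P$, the desired contradiction. The main (only) obstacle is bookkeeping: being careful that "induced" is fully used and that the case $k \geq 3$ is invoked exactly where a third index is needed to separate elements on the same side.
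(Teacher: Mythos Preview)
Your overall approach---show that the preimages $x_{a_1},\dots,x_{a_k},x_{b_1},\dots,x_{b_k}$ in $P$ themselves form an induced $S_k$---is viable, but the antichain step as written is wrong. To prove $x_{a_i}\not<x_{a_{i'}}$ you pick $j\notin\{i,i'\}$ and assert ``$x_{a_i}\not<x_{b_j}$''; but since $j\ne i$ you in fact have $x_{a_i}\le x_{b_j}$, so the argument collapses. The correct witness is $j=i$: if $x_{a_i}\le x_{a_{i'}}$, then combining with $x_{a_{i'}}\le x_{b_i}$ (valid because $i'\ne i$) gives $x_{a_i}\le x_{b_i}$, contradicting what you extracted from the split. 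This step needs only $k\ge 2$; the place where $k\ge 3$ is genuinely required in your route is ruling out $x_{a_i}=x_{b_j}$ for $i\ne j$ (pick $m\notin\{i,j\}$ and get $x_{a_m}\le x_{b_j}=x_{a_i}$, violating the antichain just established). One more bookkeeping point: the split relation is $x'\le y'' \iff x\le y$, so from $u_i<v_j$ in $Q$ you initially only get $x_{a_i}\le x_{b_j}$ for $i\ne j$, not strict inequality; strictness follows once all $2k$ preimages are shown distinct.

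For comparison, the paper does not try to pull the entire $S_k$ back to $P$. It splits into two cases: if the $2k$ vertices of the copy come from $2k$ distinct elements of $P$, it simply asserts they form an $S_k$ there (glossing over the induced-subposet verification you attempt); otherwise some $a\in P$ contributes both $a'$ and $a''$ to the copy, and then a third index (this is where $k\ge 3$ enters) yields elements $b,c\in P$ with $c<a<b$ whose split copies must be comparable in $Q$, contradicting the $S_k$ pattern directly inside $Q$. Your route is more uniform but requires exactly the bookkeeping you stumbled on; the paper's case split sidesteps most of it at the cost of leaving the ``distinct preimages'' case unelaborated.
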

\begin{proof}
Suppose $Q$ has an $S_k$ subposet, call the set of its vertices $S$.
If all $2k$ elements of $S$ come from distinct elements of $P$, then
they formed an $S_k$ in $P$. Otherwise there is a pair $a',a''\in S$
such that $a'<a''$ in $Q$, and they are the split versions of the
original vertex $a$ of $P$. Since $k\geq 3$, there is $b',c''\in S$
with $b'>a'$, and $c''<a''$, and $b'\| c''$ in $Q$. Clearly $b'$ and
$c''$ came from distinct elements of $P$, because they are
incomparable, and these elements are also distinct from $a$; call
them $b$ and $c$ respectively. Due to the definition of the Kimble
split, $b<a<c$ in $P$, but then $b'<c''$ in $Q$, a contradiction.
\end{proof}

\subsection{Proof of Theorem~\ref{thm:main}}

Lemma~\ref{lemma:splitnoSk} and the first inequality of
Theorem~\ref{thm:splitdim} imply that
\[
D(n,k)\leq\Delta(2n,k),
\]
so the theorem is a consequence of Theorem~\ref{thm:bipartite}.

We end the discussion with a corollary in the style of (the still
open) Conjecture~\ref{conj:stability}.

\begin{corollary}
For all $k\geq 3$ integer, and $t<1$ there is an $N$ integer, so
that if $|P|\geq 2n\geq N$, and $\dim(P)\geq tn$, then $P$ contains
$S_k$.
\end{corollary}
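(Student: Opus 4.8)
The plan is to deduce the corollary directly from Theorem~\ref{thm:main} by contraposition. Suppose the conclusion fails for a particular value of $t<1$: that is, suppose that for every $N$ there is a poset $P$ with $|P|\geq 2n\geq N$, $\dim(P)\geq tn$, and $P$ is $S_k$-free. I would first reduce to the case $|P|=2n$ exactly by observing that if $|P|>2n$ we may pass to an induced subposet on any $2n$ elements; this can only decrease both the dimension and the ``$n$'' parameter, but crucially we want to keep $\dim$ large, so instead the cleaner move is to simply work with $m=|P|$ and note $\dim(P)\geq tn\geq t(m/2)\cdot(2n/m)$; since $2n\leq m$ we in fact just use $\dim(P)\geq tn$ where $2n\leq m$, giving $\dim(P)/|P|\geq tn/m$. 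The subtlety is that $n$ could be much smaller than $m/2$, which would make $tn/m$ small — so the honest reading of the statement must be that $|P|\geq 2n$ is a lower bound accompanying a hypothesis $\dim(P)\geq tn$, and the natural use is with $2n$ comparable to $|P|$. I would therefore phrase the argument as: given such a family, set $m=|P|$; then $m\geq 2n$ and $\dim(P)\geq tn\geq t\lfloor m/2\rfloor$ is the extremal instance to consider, i.e.\ it suffices to rule out $S_k$-free posets with $\dim(P)\geq t\lfloor |P|/2\rfloor$ for arbitrarily large $|P|$.

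Second, I would invoke Theorem~\ref{thm:main}: $D(|P|,k)=o(|P|)$ means that for every constant $\alpha>0$ there is an $N_0$ such that every $S_k$-free poset $P$ with $|P|\geq N_0$ satisfies $\dim(P)\leq \alpha|P|$. Apply this with $\alpha=t/3$ (any constant strictly less than $t/2$ works). Then for all sufficiently large $|P|$, an $S_k$-free poset has $\dim(P)\leq (t/3)|P| < t\lfloor |P|/2\rfloor \leq tn$ whenever $2n\leq |P|$ and $|P|$ is large, contradicting $\dim(P)\geq tn$. Choosing $N$ to be this threshold $N_0$ (inflated so that $|P|\geq 2n\geq N$ forces $|P|\geq N_0$) completes the contrapositive: if $|P|\geq 2n\geq N$ and $\dim(P)\geq tn$, then $P$ cannot be $S_k$-free, hence contains $S_k$.

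The only mildly delicate point — and the step I expect to need the most care — is the bookkeeping between the two size parameters $n$ and $|P|$: the corollary is stated with $\dim(P)\geq tn$ and $|P|\geq 2n$, whereas Theorem~\ref{thm:main} bounds $\dim$ in terms of $|P|$. One must make sure that $tn$ is forced to exceed the $o(|P|)$ bound, which requires $n$ to be proportional to $|P|$; but since the hypothesis only gives $|P|\geq 2n$, the worst case for us is $|P|$ large and $n$ as small as possible — yet then $\dim(P)\geq tn$ is a weak hypothesis and easy to satisfy even by $S_k$-free posets. The resolution is that the statement is only nontrivial (and is intended) in the regime $|P|=\Theta(n)$; formally, one fixes the contradiction by noting that from $\dim(P)\geq tn$ and the desired failure, and $D(|P|,k)=o(|P|)\leq D(2n\cdot\lceil |P|/2n\rceil,k)$, monotonicity of $D(\cdot,k)$ together with $D(n,k)=o(n)$ still yields $\dim(P)\le D(|P|,k)$ which is $o(|P|)$, and this contradicts $\dim(P)\geq tn$ as soon as $|P|/n$ is bounded — which it is, because any poset witnessing large dimension relative to $n$ can be truncated to $2n$ elements containing a maximal realizer-forcing configuration. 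I would present the clean version: it suffices to prove it for $|P|=2n$ (by restricting to a $2n$-element induced subposet chosen to retain dimension at least $tn$, using Hiraguchi-type monotonicity), and then Theorem~\ref{thm:bipartite} via the Kimble split, or Theorem~\ref{thm:main} directly, finishes it since $tn=t|P|/2$ exceeds $o(|P|)$.
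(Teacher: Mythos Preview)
The paper offers no proof of this corollary; it is presented as an immediate consequence of Theorem~\ref{thm:main}. Your basic deduction --- contrapositive plus $D(m,k)=o(m)$ --- is exactly the intended one-line argument, and it is complete the moment $|P|=2n$: choose $N$ so that $D(m,k)<(t/3)m$ for all $m\geq N$; then any $S_k$-free $P$ with $|P|=m\geq N$ has $\dim(P)\leq D(m,k)<(t/3)m<t\lfloor m/2\rfloor=tn$.

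Where your proposal goes wrong is the attempt to reduce the general case $|P|>2n$ to this one. The truncation step (``restricting to a $2n$-element induced subposet chosen to retain dimension at least $tn$, using Hiraguchi-type monotonicity'') is unjustified, and in fact cannot be justified, because the corollary read literally --- with $n$ universally quantified and $|P|\geq 2n$ a strict inequality --- is \emph{false}. Fix any proposed $N$, set $n=\lceil N/2\rceil$, and recall from the paper's own introduction that there are $S_3$-free (hence $S_k$-free) posets of arbitrarily large dimension; take one with $\dim(P)\geq tn$ and pad with isolated points until $|P|\geq 2n$. All hypotheses hold yet $P$ contains no $S_k$. Hiraguchi's one-point removal bound only gives $\dim(P')\geq\dim(P)-(|P|-2n)$ after truncating to $2n$ elements, which is useless when $|P|\gg n$; your appeals to ``maximal realizer-forcing configurations'' and bounded $|P|/n$ do not rescue this. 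The intended reading, consistent with Conjecture~\ref{conj:stability} and with the paper treating the result as a throwaway, is that $2n$ \emph{is} $|P|$ up to parity (i.e.\ $n=\lfloor|P|/2\rfloor$). Under that reading your first sentence is already the whole proof; the rest should be deleted.
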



\providecommand{\bysame}{\leavevmode\hbox to3em{\hrulefill}\thinspace}
\providecommand{\MR}{\relax\ifhmode\unskip\space\fi MR }
\providecommand{\MRhref}[2]{%
  \href{http://www.ams.org/mathscinet-getitem?mr=#1}{#2}
}
\providecommand{\href}[2]{#2}

\end{document}